\theoremstyle{theorem}
\newtheorem{theorem}{Theorem}
\newtheorem{lemma}[theorem]{Lemma}
\newtheorem{corollary}[theorem]{Corollary}
\theoremstyle{definition}
\newtheorem{definition}[theorem]{Definition}
\newtheorem{example}[theorem]{Example}
\begin{document}

\title{Asymptotics on a Class of Legendre Formulas}
\markright{Asymptotics on Legendre Formulas}
\author{Maiyu Diaz}

\maketitle

\begin{abstract}
Let $f$ be a real-valued function of a single variable such that it is positive over the primes. In this article, we construct a factorial, $n!_f$, associated to $f$, called the associated Legendre formula, or $f$-factorial, and show, subject to certain criteria, that $n!_f$ satisfies a weak Stirling approximation. As an application, we will give weak approximations to the Bhargava factorial over the set of primes and to a less well-known Legendre formula.
\end{abstract}

\noindent

\section{Introduction.} \label{sec1}

The factorial function is a fundamental object of utmost importance, finding itself in various disciplines, including combinatorics, number theory, ring theory, and many others. Places where it arises include, but are not limited to, being a building block for binomial coefficients, being used to count the number of permutations of a collection of objects, and contributions to a major part of Taylor series expansions. The study of obtaining asymptotics to $n!$ dates back to the 17th and 18th century, where, in particular, Abraham de Moivre showed that \cite{lecam}
$$n! \sim Ke^{-n}n^{n+1/2}$$
for some constant $K$. Later, in the 18th century, James Stirling \cite{pearson} was able to specify the constant: $K = \sqrt{2 \pi}$.

At the turn of the 21st century, Manjul Bhargava introduced a generalization of the factorial in \cite{bhargava}. 

\begin{definition} \label{def1}
Let $S\subseteq \mathbb{Z}$ be arbitrary and fix a prime $p$. Construct a sequence $\{a_i\}_{i \geq 0}$, called a \emph{$p$-ordering} of $S$, of elements of $S$ as follows:
\begin{itemize}
\item Choose any element $a_0 \in S$.
\item Choose an element $a_1 \in S$ that minimizes the highest power of $p$ dividing $a_1-a_0$.
\item In general, for each $n \ge 1$, choose an element $a_n \in S$ that minimizes the highest power of $p$ dividing $(a_n -a_0)(a_n-a_1)\cdots(a_n-a_{n-1})$.
\end{itemize}
If $w_p(a)$ denoes the highest power of $p$ dividing $a$, then define
$$v_n(S,p) = w_p((a_n-a_0)(a_n-a_1)\cdots(a_n-a_{n-1})).$$
Then the \textit{Bhargava factorial over} $S$, denoted $n!_S$, is defined as
$$n!_S = \prod_{p} v_n(S,p).$$
\end{definition}

The Bhargava factorial $n!_S$, with $S \subseteq \mathbb{Z}$, satisfies four properties analogous to the factorial:

\begin{enumerate}

\item{For any nonnegative integers $k$ and $l$, $(k+l)!_S$ is a multiple of $k!_S l!_S$.}

\item{Let $f$ be a primitive polynomial of degree $k$ and let $d(S, f) = \mathrm{gcd} \, \{f(a) \, | \, a \in S \}$. Then $d(S, f)$ divides $k!_S$.}

\item{Let $a_0, a_1, \dots, a_n \in S$ be any $n+1$ integers. Then the product $$\prod_{i < j} (a_i - a_j)$$ is a multiple of $0!_S 1!_S \cdots n!_S$.}

\item{The number of polynomial functions from $S$ to $\mathbb{Z}/n\mathbb{Z}$ is given by $$\prod_{k=0}^{n-1} \frac{n}{\mathrm{gcd}\, (n, k!_S)}.$$}

\end{enumerate}

Manjul Bhargava in \cite{bhargava} remarks the following:

\bigskip
\noindent\textbf{Question 31:} \text{ What are analogues of Stirling's formula for generalized factorials?}
\bigskip

\noindent In this article, we obtain weak asymptotic formulas for $\mathbb{P}$, the set of primes, and a set $S'$, which we will define momentarily. Specifically, we prove the following theorems.
\begin{theorem} \label{thm2}
Let $n!_\mathbb{P}$ denote the Bhargava factorial over the set of primes $\mathbb{P}$. Then
$$\log (n+1)!_\mathbb{P} = \log n! + Cn + o(n),$$
where $$C = \sum_{p \in \mathbb{P}} \frac{\log p}{(p-1)^2} = 1.2269688\dots .$$
\end{theorem}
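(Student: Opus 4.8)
The plan is to reduce the computation of $n!_{\mathbb P}$ to an explicit Legendre-type formula for each prime $q$, and then to carry out the asymptotic analysis prime by prime. Fix $q$ and consider the greedy $q$-ordering of $\mathbb P$ (recall that Bhargava's construction makes $v_n(S,q)$ independent of the ordering). The first observation is that $q$ is \emph{$q$-adically isolated}: for every other prime $p$ we have $v_q(q-p)=0$, since $p$ is a unit mod $q$, so $q$ contributes valuation $0$ to every product it enters. Placing it first therefore gives $v_q(v_n(\mathbb P,q))=v_q(v_{n-1}(F,q))$ for $n\ge 1$, where $F=\mathbb P\setminus\{q\}$. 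The second observation is that, by Dirichlet's theorem on primes in arithmetic progressions, $F$ meets each of the $\phi(q^k)=q^{k-1}(q-1)$ residue classes coprime to $q$ modulo $q^k$, and meets each infinitely often; so the $q$-adic tree of $F$ is the full tree in which the root has $q-1$ children and every deeper node has $q$ children, all infinitely populated. Its $q$-adic structure is thus identical to that of the integers coprime to $q$, and I would isolate as a lemma the resulting formula
$$v_q\bigl(v_m(F,q)\bigr)=\sum_{k\ge 1}\left\lfloor\frac{m}{q^{k-1}(q-1)}\right\rfloor,$$
obtained by running the greedy ordering and distributing the chosen elements as evenly as possible across the occupied classes at every level simultaneously; the balanced filling is consistent down the tree precisely because every coprime node branches fully. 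Writing $g(m):=\sum_{k\ge1}\lfloor m/(q^{k-1}(q-1))\rfloor$, the two observations give $\log (n+1)!_{\mathbb P}=\sum_{q}g(n)\log q$, while $\log n!=\sum_{q}v_q(n!)\log q$ with $v_q(n!)=\sum_{k\ge1}\lfloor n/q^k\rfloor$, so the theorem reduces to proving $\sum_{q}\bigl(g(n)-v_q(n!)\bigr)\log q=Cn+o(n)$.

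For the asymptotics I would first record the smooth main terms: ignoring floors, $\sum_{k\ge1}m/(q^{k-1}(q-1))-\sum_{k\ge1}m/q^k=\tfrac{mq}{(q-1)^2}-\tfrac{m}{q-1}=\tfrac{m}{(q-1)^2}$, which is the source of $C=\sum_q\log q/(q-1)^2$; this sum converges since its terms are $O(\log q/q^2)$. Writing $h_q(n)=g(n)-v_q(n!)=\tfrac{n}{(q-1)^2}+r_q(n)$, the rounding term satisfies $|r_q(n)|=O(\log_q n)$, because each floor-sum differs from its smooth value by at most $O(\log_q n)$ fractional parts plus a geometric tail. Since $g(n)=0$ once $q>n+1$, the sum over $q$ is finite, and I would split it at $\sqrt n$. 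For $q\le\sqrt n$ the main terms reassemble via the tail bound $\sum_{q>x}\log q/q^2=O(x^{-1}\log x)$, giving $\sum_{q\le\sqrt n}\tfrac{n\log q}{(q-1)^2}=Cn-O(\sqrt n\log n)$, while the error is $\sum_{q\le\sqrt n}|r_q(n)|\log q=\sum_{q\le\sqrt n}O(\log n)=O(\log n)\,\pi(\sqrt n)=O(\sqrt n)$ by Chebyshev's estimate.

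The delicate regime, and the step I expect to be the main obstacle, is $\sqrt n<q\le n+1$, where the main term $n/(q-1)^2$ and the naive bound $|r_q(n)|=O(1)$ are of the same order, so term-by-term estimation only yields $O\bigl(\sum_{q\le n}\log q\bigr)=O(n)$ — exactly the order of $Cn$, hence useless. Here I would instead bound $h_q(n)$ directly: for $q>\sqrt n$ we have $v_q(n!)=\lfloor n/q\rfloor$ and $g(n)=\lfloor n/(q-1)\rfloor+O(1)$, so the leading part of the sum is $\sum_{\sqrt n<q\le n+1}\bigl(\lfloor n/(q-1)\rfloor-\lfloor n/q\rfloor\bigr)\log q$. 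As the summand is nonnegative I may drop the restriction to primes and bound $\log q\le\log n$, after which the telescoping identity $\sum_{a<m\le b}\bigl(\lfloor n/(m-1)\rfloor-\lfloor n/m\rfloor\bigr)=\lfloor n/a\rfloor-\lfloor n/b\rfloor$ with $a=\lfloor\sqrt n\rfloor$ collapses everything to $O(\sqrt n)$, giving $O(\sqrt n\log n)$; the residual $k\ge2$ contributions are $O\bigl(n\sum_{q>\sqrt n}\log q/q^2\bigr)=O(\sqrt n\log n)$ as well. Assembling the three pieces yields $\sum_q h_q(n)\log q=Cn+O(\sqrt n\log n)=Cn+o(n)$, which is the theorem (indeed with an explicit error term). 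The two points demanding the most care are the rigorous justification of the balanced-filling formula for $F$, where Dirichlet's theorem is the essential input guaranteeing a genuinely full tree, and the telescoping bound above, which is what rescues the large-prime regime from the false $O(n)$ estimate.
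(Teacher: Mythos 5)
Your proposal is correct, and it reaches Theorem \ref{thm2} by a genuinely different route, with a sharper conclusion. The paper obtains the theorem as a one-line corollary of its general machinery: it takes $f(x)=x-1$ (so that $n!_f=(n+1)!_\mathbb{P}$ by Bhargava's formula, which the paper, like you, does not reprove), checks the hypothesis $0\le \frac{1}{p-1}-\frac{1}{p}=\frac{1}{p(p-1)}\le\frac{2}{p^2}$ of Theorem \ref{thm13} with $\alpha=1$, $M=2$, and reads off $\beta_f=\sum_p\frac{p\log p}{p-1}\bigl(\frac{1}{p-1}-\frac{1}{p}\bigr)=\sum_p\frac{\log p}{(p-1)^2}=C$; all the analytic work is hidden in Theorem \ref{thm13} and Lemma \ref{lem14}, which are proved via $f$-Chebyshev functions, a Riemann-sum-to-improper-integral argument, and two applications of the dominated convergence theorem, and which yield only an $o(n)$ error. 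You instead prove the special case directly: after reducing to the same Legendre-type formula $\log(n+1)!_\mathbb{P}=\sum_q\bigl(\sum_{k\ge0}\lfloor n/(q^k(q-1))\rfloor\bigr)\log q$, your elementary analysis --- the split at $\sqrt n$, the bound $|r_q(n)|\log q=O(\log n)$ combined with Chebyshev's estimate $\pi(\sqrt n)\log n=O(\sqrt n)$, and especially the telescoping identity that rescues the range $\sqrt n<q\le n+1$ where termwise estimation stalls at $O(n)$ --- is sound and gives the explicit error $O(\sqrt n\log n)$, strictly stronger than the paper's $o(n)$. What each approach buys: the paper's Theorem \ref{thm13} is reusable (it also delivers Theorem \ref{thm3} and any $f$ with $0\le 1/f(p)-\alpha/p\le M/p^2$), whereas your argument is self-contained, avoids measure-theoretic convergence theorems entirely, and quantifies the error term. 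The one step you should shore up is the ``balanced filling'' lemma $v_q(v_m(F,q))=\sum_{k\ge1}\lfloor m/(q^{k-1}(q-1))\rfloor$: your tree picture (Dirichlet guarantees every residue class coprime to $q$ modulo $q^k$ is occupied, and every occupied coprime node has $q$ occupied children, so a greedy ordering can balance all levels simultaneously) is the right idea, but as written it is a sketch; either cite Bhargava's computation of $n!_\mathbb{P}$ --- which is exactly what the paper does implicitly when it asserts $n!_{x-1}=(n+1)!_\mathbb{P}$ --- or prove that the invariants $v_n(S,q)$ depend only on the $q$-adic closure of $S$ and compute the factorial of $\{q\}\cup\mathbb{Z}_q^\times$ directly.
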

Consider the sequence $$1, 6, 360, 45360, 5443200, 359251200, \dots$$ which can be found as sequence A202367 at \url{www.oeis.org}. While this sequence may not be obviously related to the Bhargava factorial, it is interesting to note that in \cite{nitesh}, N.~Mathur devised an algorithm to show that, for any sequence under specific criteria, we can determine a set $S \subseteq \mathbb{Z}$ associated with the sequence. From this, we can determine a set $S'$ associated with the sequence A202367; a quick computation of the first few terms of the set gives $S' = \{2,4,16,22,\dots\}$, outputting $n!_{S'} = \{1,1,6,360,\dots\}$.
\begin{theorem} \label{thm3}
Let $n!_{S'}$ denote the Bhargava factorial over the set associated with the sequence A202367. Then
$$\log (n+1)!_{S'} = \log (2n)! + \beta n + o(n),$$
where $$ \beta = \sum_{p \in \mathbb{P}} \frac{\log p}{(p-1)} \left( \frac{p}{\left \lceil (p-1)/2 \right \rceil} - 2\right) = 1.0676431\dots .$$
\end{theorem}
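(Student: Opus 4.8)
The plan is to mirror the strategy behind Theorem~\ref{thm2}, the new ingredient being the distribution of $S'$ in residue classes. Writing $v_n(S',q) = q^{e_n(S',q)}$ and taking logarithms in the defining product $n!_{S'} = \prod_q v_n(S',q)$ reduces the claim to the per-prime statement
\[
e_{n+1}(S',q) = w_q\big((2n)!\big) + \frac{n}{q-1}\left(\frac{q}{\lceil (q-1)/2\rceil} - 2\right) + \varepsilon_n(q),
\]
where the error terms $\varepsilon_n(q)$ must sum, against $\log q$, to $o(n)$. The engine is the observation (already needed for Theorem~\ref{thm2}) that in a $q$-ordering the $n$-th exponent counts repetitions across the levels of the residue tree, $e_n(S',q) = \sum_{k\ge 1}\#\{i<n : a_i \equiv a_n \bmod q^k\}$, so that for a set equidistributed over $c_k(q)$ nonempty classes modulo $q^k$ a balanced ordering yields $e_n(S',q) = \sum_{k\ge1}\lfloor n/c_k(q)\rfloor + O(\log_q n)$.

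First I would pin down the residue data of $S'$. The heart of the argument is to show that, for odd $q$, $S'$ meets exactly $\lceil (q-1)/2\rceil$ classes modulo $q$ and that each such class refines into a full set of $q$ classes at every deeper level, giving $c_k(q) = q^{k-1}\lceil (q-1)/2\rceil$; at $q=2$ a separate computation must show that the $2$-adic valuations reproduce those of $(2n)!$ to leading order, consistent with the vanishing of the $q=2$ summand in $\beta$. Granting this, geometric summation gives $\sum_{k\ge1} c_k(q)^{-1} = q/\big((q-1)\lceil (q-1)/2\rceil\big)$, hence $e_{n+1}(S',q) = nq/\big((q-1)\lceil (q-1)/2\rceil\big) + O(\log_q n)$.

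It then remains to compare with the ordinary factorial through Legendre's formula $w_q((2n)!) = \tfrac{2n}{q-1} + O(\log_q n)$. Subtracting produces precisely the per-prime correction $\tfrac{n}{q-1}\big(\tfrac{q}{\lceil (q-1)/2\rceil} - 2\big)$, whose weighted sum is $\beta n$; one checks en route that comparing to $(2n)!$ rather than $n!$ is essentially forced, since the summand then behaves like $2\log q/(q-1)^2$ and converges, whereas the analogous correction against $n!$ would diverge.

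The main obstacle is twofold and lives in the last step. Establishing the residue distribution of $S'$ rigorously requires unwinding Mathur's construction \cite{nitesh} well enough to control how $S'$ populates classes modulo $q^k$, together with the $q=2$ anomaly. Second, the error $\sum_q \varepsilon_n(q)\log q$ must be shown to be $o(n)$ uniformly: the balanced-ordering estimate is only accurate once enough elements have been placed to fill the classes modulo $q$, so the primes should be split into ranges — small $q$, where the floor and equidistribution errors contribute $O(\log_q n)$ each and sum acceptably, and large $q$ (comparable to or exceeding the size of the elements in play), where the valuation is small and must be bounded crudely yet summed over many primes. Controlling this transition region is where the real care is required.
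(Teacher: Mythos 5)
Your strategy is fundamentally different from the paper's, and as outlined it contains two genuine gaps, one of which is fatal to the plan as described. First, the structural claim you yourself call the heart of the argument --- that $S'$ meets exactly $\lceil (q-1)/2\rceil$ classes modulo $q$ and refines fully at higher prime powers, so that $c_k(q)=q^{k-1}\lceil (q-1)/2\rceil$ --- is precisely equivalent to the closed-form Legendre representation (\ref{eqn16}), which the paper does not prove but imports from Shevelev--Moses and Mathur's construction: it takes $(n+1)!_{S'}=n!_f$ with $f(x)=\lceil (x-1)/2\rceil$ as given. You propose to establish this structure by ``unwinding Mathur's construction,'' but supply no argument, so the step on which everything else in your plan rests is missing. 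Moreover, once that structure is in hand you have (\ref{eqn16}) exactly, and at that point the paper's proof is two lines: verify $0\leq 1/f(p)-2/p\leq 4/p^2$ (for odd $p$ the middle quantity is $2/(p(p-1))$, and it vanishes at $p=2$), so $\alpha=2$ and $M=4$ satisfy the hypotheses of Theorem~\ref{thm13}, and read off $\beta_f=\sum_{p}\frac{p\log p}{p-1}\bigl(\frac{1}{\lceil (p-1)/2\rceil}-\frac{2}{p}\bigr)=\beta$. Your proposal never invokes Theorem~\ref{thm13}, and so undertakes to redo, prime by prime, exactly the analysis that theorem was built to handle globally.

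Second, and decisively: the error control cannot yield $o(n)$ by the means you describe. The primes contributing to $w_q((2n)!)$ run up to $2n$, and bounding the per-prime error by $O(\log_q n)$ for small $q$ and ``crudely'' for large $q$, then summing against $\log q$, gives $\sum_{q\leq 2n}O(\log q)=O(\vartheta(2n))=O(n)$ --- exactly the order you need to beat. This is not pessimism of the bound: the floor-function errors in Legendre's formula for a single factorial genuinely contribute $\Theta(n)$ (they are the source of the $-n$ in Stirling's formula), so no estimate that treats the two factorials' floor sums separately can succeed; one must exhibit cancellation in their difference. That cancellation is the entire content of Lemma~\ref{lem14} and the proof of Theorem~\ref{thm13}, which convert the floor sums into Chebyshev-type sums $\sum_m \psi_f(\alpha n/m)$ and $\sum_m \vartheta_h(\alpha n/m)$, compare them via convergent improper integrals, and use Dusart's bound $\vartheta(x)<Cx$. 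A salvage inside your elementary framework does exist for the large primes: for $q>\sqrt{2n}+1$ only the $k=0$ terms survive, and $\lfloor 2n/(q-1)\rfloor-\lfloor 2n/q\rfloor$ is $\{0,1\}$-valued and nonzero for at most one prime $q$ per integer $m\leq\sqrt{2n}$, so that range contributes only $O(\sqrt{n}\log n)$, while the primes $q\leq\sqrt{2n}+1$ are few enough for your $O(\log_q n)$ bound to suffice. But nothing of this kind appears in your proposal; as written, the ``transition region'' you flag is an unclosed gap of size $O(n)$, i.e., as large as the main term.
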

The proofs of Theorems \ref{thm2} and \ref{thm3} will be given in Section \ref{sec4}. Each of the Bhargava factorials discussed in \cite{bhargava} and in sequence A202367 yields a Legendre formula that can be expressed as
\begin{equation}
\label{ffactorial} \prod_{p \in \mathbb{P}} p^{\sum_{k \geq 0 } \left \lfloor\frac{n}{f(p)p^k} \right \rfloor},
\end{equation}
for some real-valued map $f$---as such, in determining the asymptotics for the Bhargava factorials discussed, we will investigate (\ref{ffactorial}).
In Section \ref{sec2}, we will define the associated Legendre formula for a real-valued map, or alternatively for the $f$-factorial, and prove specific properties about them and make connections with the Bhargava factorial.

In Section \ref{sec3}, we will prove, under certain criteria for $f$, an asymptotic formula for the $f$-factorial---a general result that will provide approximations for the Bhargava factorial over $\mathbb{P}$ and $S'$. Due to the lack of assumptions on $f$, we consider classes of functions $f$ that can be approximated ``well-enough" by linear maps, which we will explain further in this section.

In Section \ref{sec4}, we will apply our results from the previous section to certain classes of Legendre formulas obtained from the Bhargava factorial, including over $\mathbb{P}$ and $S'$ discussed earlier, and compare them to their respective asymptotics.

\section{Preliminaries.} \label{sec2}
Before we proceed, we let $\left \lfloor x \right \rfloor$ denote the floor function of a real number $x$, $\log$ denote the natural logarithm, and $\mathbb{N} = \mathbb{Z}_+ = \{1,2,3,\dots\}$ denote the natural numbers.
\begin{definition} \label{def4}
   Let $f: \mathbb{R} \to \mathbb{R}$ such that $f$ is positive over $\mathbb{P}$. The \textit{associated Legendre formula for} $f$, or the $f$-\textit{factorial}, is defined as 
    $$n!_{f(x)} = \prod_{p \in \mathbb{P}} p^{\sum_{k\geq 0} \left\lfloor \frac{n}{f(p)p^k} \right\rfloor} .$$
For short, we may write $n!_f$ instead.
\end{definition}
\begin{example} \label{ex5}
Let $f(x) = x$ be the identity map; by Legendre's formula, we have
$$n!_x = n! = \prod_{p \in \mathbb{P}} p^{\sum_{k\geq 0} \left\lfloor \frac{n}{p^{k+1}} \right\rfloor},$$
with the sequence $\{n!_x\}_{n\geq 0} = \{1, 1, 2, 6, 24, 120, \dots \}.$
\end{example}
\begin{example} \label{ex6}
Let $f(x) = \log x $; we have
$$n!_{\log x} = \prod_{p \in \mathbb{P}} p^{\sum_{k\geq 0} \left\lfloor \frac{n}{p^{k}\log p} \right\rfloor},$$
with the sequence $\{n!_{\log x}\}_{n\geq 0} = \{1, 2, 840,  1862340480, \dots \}.$
\end{example}
While the above examples produce interesting and known examples of Legendre formulas, there are other examples for which $n!_f$ is not defined.
\begin{example} \label{ex7}
Let $f(x) = |\sin x| $; we have
$$n!_{|\sin x|} = \prod_{p \in \mathbb{P}} p^{\sum_{k\geq 0} \left\lfloor \frac{n}{p^{k}|\sin p|} \right\rfloor}.$$
However, since $|\csc x| \geq 1$, we have
$$n!_{|\sin x|} \geq  \prod_{p \in \mathbb{P}} p^{n},$$
where the product diverges for $n \geq 1$. Thus, $n!_{|\sin x|}$ is not defined for all $n$.
\end{example}

Based on this, we find, whenever $f$ tends to infinity along $\mathbb{P}$, that its $f$-factorial $n!_f$ is a natural number. In particular, one of the interesting facts about $n!_f$ is its being a natural number is equivalent to the first exponential term being a natural number; in addition, $n!_f$ is a natural number whenever we consider the classes of functions $f$ mentioned previously. 
\begin{theorem} \label{thm8}
Let $f: \mathbb{R} \to \mathbb{R}$ be positive over $\mathbb{P}$. The following are equivalent:
\begin{enumerate}
    \item The sum $\displaystyle \sum_{p \in \mathbb{P}} \log p \left\lfloor \frac{n}{f(p)} \right\rfloor$
converges to $\log \,  k$ for some natural number $k$.
    \item $n!_f$ is a natural number.
\end{enumerate}
\end{theorem}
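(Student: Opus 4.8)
The plan is to reduce both statements to a single finiteness condition on the set of primes that contribute nontrivially, and then to observe that the two relevant ``support'' sets are literally identical. Writing $e_p = \sum_{k \geq 0} \left\lfloor \frac{n}{f(p)p^k} \right\rfloor$ for the exponent of $p$ appearing in $n!_f$, I first note that because $f(p) > 0$ and $n \geq 0$, every term $\left\lfloor \frac{n}{f(p)p^k} \right\rfloor$ is a nonnegative integer; hence each $e_p$ is a nonnegative integer and $\log n!_f = \sum_{p \in \mathbb{P}} (\log p)\, e_p$ is a series of nonnegative terms.

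The key step is the pointwise equivalence, for each fixed prime $p$, that $e_p = 0$ if and only if $\left\lfloor \frac{n}{f(p)} \right\rfloor = 0$. One direction is immediate: since $e_p$ is a sum of nonnegative integers, one of which is the $k=0$ term $\left\lfloor \frac{n}{f(p)} \right\rfloor$, vanishing of $e_p$ forces $\left\lfloor \frac{n}{f(p)} \right\rfloor = 0$. For the converse, suppose $\left\lfloor \frac{n}{f(p)} \right\rfloor = 0$; then $n < f(p)$, so for every $k \geq 1$ we have $\frac{n}{f(p)p^k} < \frac{1}{p^k} < 1$ and therefore $\left\lfloor \frac{n}{f(p)p^k} \right\rfloor = 0$ as well, giving $e_p = 0$. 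Consequently the two index sets coincide exactly:
$$\{\, p \in \mathbb{P} : e_p \neq 0 \,\} = \left\{\, p \in \mathbb{P} : \left\lfloor \tfrac{n}{f(p)} \right\rfloor \neq 0 \,\right\}.$$

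It then remains to record the standard dichotomy for a series whose terms are nonnegative integer multiples of $\log p$. Because each nonzero term is at least $\log 2 > 0$, such a series converges if and only if only finitely many of its terms are nonzero, and in that case its value is $\log$ of the finite product $\prod_p p^{a_p}$, which is an honest natural number; the very same finiteness condition is what makes the corresponding product $\prod_p p^{a_p}$ equal a (finite) natural number rather than diverge. Applying this to $\sum_p (\log p) \left\lfloor \frac{n}{f(p)} \right\rfloor$ shows that statement (1) is equivalent to finiteness of the right-hand support set above, while applying it to $\sum_p (\log p) e_p = \log n!_f$ shows that statement (2) is equivalent to finiteness of the left-hand support set. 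Since these two support sets are equal, (1) and (2) are equivalent.

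I do not anticipate a genuine obstacle. The only points requiring care are the converse direction of the pointwise equivalence, where the inequality $n < f(p)$ must be used to annihilate all the higher terms simultaneously, and the justification that convergence of a nonnegative integer-weighted prime series forces its limit to be the logarithm of a natural number, so that the clause ``for some natural number $k$'' in statement (1) is automatic rather than an additional hypothesis.
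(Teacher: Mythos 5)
Your proof is correct, and it takes a genuinely different route from the paper's. You slice the double sum by prime: setting $e_p = \sum_{k \geq 0} \left\lfloor n/(f(p)p^k) \right\rfloor$, you establish the pointwise equivalence $e_p = 0 \iff \left\lfloor n/f(p) \right\rfloor = 0$ (the crucial direction being that $\left\lfloor n/f(p) \right\rfloor = 0$ forces $n < f(p)$, which annihilates every higher-$k$ term simultaneously), so that statements (1) and (2) reduce to finiteness of one and the same support set; the standard dichotomy for series of nonnegative integer multiples of $\log p$ then settles both implications in a single stroke. The paper instead treats the two directions asymmetrically: for (1) $\Rightarrow$ (2) it slices by $k$, writing $n!_f$ as a product of the factors $a_k = \prod_p p^{\lfloor n/(f(p)p^k) \rfloor}$ and invoking Lemma \ref{lem9} (a pigeonhole statement about decreasing sequences of nonnegative integers) to conclude that only finitely many $a_k$ exceed $1$; for (2) $\Rightarrow$ (1) it uses the fundamental theorem of arithmetic together with the observation that convergence forces the terms $\log p \left\lfloor n/f(p) \right\rfloor$ to tend to zero, hence to vanish for all large $p$. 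Your organization is more economical and symmetric---it needs no analogue of Lemma \ref{lem9}, and it makes explicit that the clause ``for some natural number $k$'' in (1) is automatic from convergence rather than an extra hypothesis---whereas the paper's $k$-indexed decomposition has independent value: the same machinery is what the paper uses to obtain Corollary \ref{cor10} (the divisibility of $(n+k)!_f$ by $n!_f\,k!_f$), so it is reused beyond this theorem.
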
To prove Theorem \ref{thm8} we shall first establish the following lemma.
\begin{lemma} \label{lem9}
Let $b_k \geq 0$ denote a strictly decreasing sequence of nonnegative integers; then the sequence defined by
\begin{align}a_k = \prod_{p \in \mathbb{P}} p^{b_k},\end{align}
with $a_0$ assumed to be finite, is a strictly decreasing sequence of natural numbers such that $a_N = a_{N+1} = \dots = 1$ for some $N$.
\end{lemma}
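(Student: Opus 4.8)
The plan is to exploit the two structural features of the product: the exponents are nonnegative integers, and $a_0$ is assumed finite. First I would show that finiteness of $a_0$ confines the whole family to finitely many primes. Each factor $p^{b_k}$ is at least $1$, and an infinite product of reals that are all $\ge 1$ can be finite only if all but finitely many factors equal $1$; hence finiteness of $a_0 = \prod_{p} p^{b_0}$ forces the exponent to vanish for all but finitely many primes $p$. Because the exponents do not increase with $k$ (they are strictly decreasing by hypothesis), the exponent at stage $k=0$ dominates every later one, so the same finite set of primes $T$ carries all of the $a_k$: for $p \notin T$ the exponent is already $0$ at $k=0$ and stays $0$. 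Consequently each $a_k = \prod_{p \in T} p^{b_k}$ is a finite product of prime powers and therefore a genuine natural number.

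Next I would establish the monotonicity through divisibility rather than through estimates. Since the exponent attached to each prime is non-increasing in $k$, every prime power occurring in $a_{k+1}$ occurs to at least as high a power in $a_k$; thus $a_{k+1}$ divides $a_k$, which already gives $a_{k+1} \le a_k$. Strictness then follows from the hypothesis: as long as some exponent is positive, at least one of them strictly drops when passing from $k$ to $k+1$, so $a_{k+1} < a_k$ whenever $a_k > 1$. For the stabilization at $1$, I would use that each exponent is a nonnegative integer which strictly decreases and hence reaches $0$ after finitely many steps; writing $N_p$ for the first index at which the exponent of $p$ becomes $0$ and setting $N = \max_{p \in T} N_p$ (a maximum over the finite set $T$), every exponent vanishes for $k \ge N$, so that $a_N = a_{N+1} = \dots = 1$.

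I expect the main obstacle to be the first step, namely making rigorous the passage from finiteness of the \emph{infinite} product $a_0$ to the conclusion that only finitely many primes contribute. This is where the nonnegativity of the $b_k$ and the lower bound $p^{b_k}\ge 1$ are essential, and it is what licenses treating each $a_k$ as a finite product that can be manipulated primewise. Once the reduction to the finite set $T$ is in place, the monotonicity and termination claims are routine consequences of the elementary fact that a strictly decreasing sequence of nonnegative integers must reach $0$ in finitely many steps.
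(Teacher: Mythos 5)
Your proof is correct, and it is actually more careful than the paper's own argument, though the core termination idea is the same. The paper disposes of the lemma in two sentences: since $b_k$ is strictly decreasing, $a_k$ is strictly decreasing; since a strictly decreasing sequence of nonnegative integers must reach $0$ (pigeonhole), there is an $N$ with $b_N = b_{N+1} = \dots = 0$, whence $a_N = a_{N+1} = \dots = 1$. In doing so the paper treats $b_k$ as a single scalar sequence and never invokes the hypothesis that $a_0$ is finite. Your proof supplies exactly what this glosses over: in the way the lemma is actually used (in the proof of Theorem \ref{thm8}, where $b_k = \lfloor n/(f(p)p^k)\rfloor$ depends on $p$), the finiteness of $a_0$ is what confines the product to a finite set $T$ of primes, which is what makes each $a_k$ a genuine natural number and makes your termination index $N = \max_{p \in T} N_p$ well defined; your divisibility observation $a_{k+1} \mid a_k$ is also a cleaner route to monotonicity than the paper's bare assertion. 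One caveat: your stated justification for the first step --- that an infinite product of reals all $\geq 1$ can be finite only if all but finitely many factors equal $1$ --- is false as a general statement (e.g., $\prod_{i \geq 1}(1 + i^{-2})$ converges with every factor exceeding $1$). What saves the argument here is that each factor $p^{b_0(p)}$ is an integer, hence either equal to $1$ or at least $2$, so infinitely many nontrivial factors would force the partial products to grow without bound. With that one-line repair, your argument is complete; you have also, implicitly and reasonably, read ``strictly decreasing'' as ``strictly decreasing until reaching $0$,'' which is unavoidable since an infinite strictly decreasing sequence of nonnegative integers cannot exist --- a wrinkle in the lemma's statement that the paper's own proof shares.
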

\begin{proof}Since $b_k$ is strictly decreasing, we have $a_k$ is strictly decreasing; furthermore, since $b_k$ is a strictly decreasing sequence of nonnegative integers, by the pigeon-hole principle, there exists an $N$ such $b_N = b_{N+1} = \dots = 0$. Consequently, we have $a_N = a_{N+1} = \dots = 1$, as desired.\end{proof} 

Before we proceed to prove Theorem \ref{thm8}, using Lemma \ref{lem9}, we note that $n!_f$ shares some properties with the Bhargava factorial. Specificially, one of the properties they share is that $n!_f$, like the Bhargava factorial, satisfies a binomial coefficient formula.
\begin{corollary} \label{cor10}
\textit{For arbitrary nonnegative integers} $n,k$, \textit{the} $f$-\textit{factorial} $(n+k)!_f$ \textit{is a multiple of} $n!_f k!_f$.
\end{corollary}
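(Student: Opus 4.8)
The plan is to reduce the divisibility statement to a prime-by-prime comparison of exponents and then invoke the superadditivity of the floor function. For a nonnegative integer $m$, write $E_p(m) = \sum_{j \geq 0} \lfloor m/(f(p)p^j)\rfloor$ for the exponent of $p$ occurring in $m!_f$, so that $m!_f = \prod_{p \in \mathbb{P}} p^{E_p(m)}$. Under the standing hypothesis that $(n+k)!_f$, $n!_f$, and $k!_f$ are natural numbers (which by Theorem \ref{thm8} is exactly the situation in which ``multiple'' is meaningful, and which Lemma \ref{lem9} guarantees each $E_p(m)$ vanishes for all but finitely many $p$), it suffices to show that for every prime $p$ we have $E_p(n+k) \geq E_p(n) + E_p(k)$. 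Indeed, the quotient $(n+k)!_f/(n!_f k!_f) = \prod_{p \in \mathbb{P}} p^{E_p(n+k) - E_p(n) - E_p(k)}$ would then be a product of nonnegative integer powers of primes, hence a natural number.

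For the exponent inequality I would fix a prime $p$ and compare the three sums term by term in $j$. Using the elementary superadditivity $\lfloor x + y\rfloor \geq \lfloor x\rfloor + \lfloor y\rfloor$, valid for all real $x, y$, with $x = n/(f(p)p^j)$ and $y = k/(f(p)p^j)$ (so that $x + y = (n+k)/(f(p)p^j)$), each term of $E_p(n+k)$ dominates the sum of the corresponding terms of $E_p(n)$ and $E_p(k)$. Summing over $j \geq 0$ yields $E_p(n+k) \geq E_p(n) + E_p(k)$ directly. Collecting these inequalities over all primes shows that every prime exponent appearing in $n!_f k!_f$ is at most the corresponding exponent in $(n+k)!_f$, which is precisely the claim that $n!_f k!_f$ divides $(n+k)!_f$.

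This is essentially the classical Legendre--Kummer argument that the binomial coefficient $(n+k)!/(n!\,k!)$ is an integer, adapted to the extra weight $f(p)$ in the denominators; the presence of $f(p)$ changes nothing, since the floor superadditivity holds for arbitrary real arguments. I expect the only genuine subtlety to be bookkeeping rather than mathematics: ensuring that the three $f$-factorials are well-defined natural numbers so that divisibility is meaningful, which is where Lemma \ref{lem9} and Theorem \ref{thm8} enter. The arithmetic heart of the proof, the termwise floor inequality, is entirely routine and carries no real obstacle.
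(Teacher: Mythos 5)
Your proof is correct, and it supplies exactly the argument the paper leaves implicit: the paper states Corollary~\ref{cor10} without proof, and the intended justification is precisely your prime-by-prime comparison of exponents via the termwise floor superadditivity $\lfloor x\rfloor + \lfloor y\rfloor \leq \lfloor x+y\rfloor$. Your bookkeeping on well-definedness (invoking Lemma~\ref{lem9} and Theorem~\ref{thm8} so that all three $f$-factorials are natural numbers and divisibility is meaningful) likewise matches the paper's standing assumptions, so there is nothing to add.
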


In addition, $n!_f$ and the Bhargava factorial both satisfy a comparison theorem; that is, in \cite[Lemma 13]{bhargava}, M. Bhargava noted that if $T,S$ are two subsets of $\mathbb{Z}$ and $n!_T$, $n!_S$ are their respective Bhargava factorials such that $T \subseteq S$, then $n!_S \, | \, n!_T$ for every $n \geq 0$.
\begin{corollary} \label{cor11}
\textit{Let} $f,g: \mathbb{R} \to \mathbb{R}$ \textit{be real-valued maps and let} $n!_f, n!_g$ \textit{denote their respective} $f$-\textit{factorials.} \textit{Suppose} $f(p) \leq g(p)$ \textit{for all primes} $p$\textit{; then} $n!_g \, | \, n!_f$\textit{.}
\end{corollary}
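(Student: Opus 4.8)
The plan is to compare $n!_g$ and $n!_f$ one prime at a time. Since each $f$-factorial is by definition a product over $\mathbb{P}$ of prime powers, the divisibility $n!_g \mid n!_f$ is equivalent to the assertion that, for every prime $p$, the exponent of $p$ occurring in $n!_g$ does not exceed the exponent of $p$ occurring in $n!_f$. Writing
$$E_h(p) = \sum_{k \geq 0} \left\lfloor \frac{n}{h(p) p^k} \right\rfloor$$
for the exponent of $p$ in $n!_h$, it thus suffices to prove $E_g(p) \leq E_f(p)$ for each $p \in \mathbb{P}$.

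The key step is a termwise comparison of the two sums. For the $f$-factorials to be defined we take $f$ and $g$ positive over $\mathbb{P}$ as in Definition \ref{def4}, so that $0 < f(p) \leq g(p)$. Because $n \geq 0$ and $p^k > 0$, dividing through by these positive quantities reverses the inequality, giving $\frac{n}{g(p) p^k} \leq \frac{n}{f(p) p^k}$ for every $k \geq 0$. The floor function is nondecreasing, hence $\left\lfloor \frac{n}{g(p) p^k} \right\rfloor \leq \left\lfloor \frac{n}{f(p) p^k} \right\rfloor$ for each $k$.

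Summing this inequality over all $k \geq 0$ yields $E_g(p) \leq E_f(p)$, valid for every prime $p$, and therefore $n!_g \mid n!_f$. The only point requiring a little care is well-definedness: if $n!_f$ is a natural number then $E_f(p) = 0$ for all but finitely many $p$, and since the exponents are nonnegative the chain $0 \leq E_g(p) \leq E_f(p)$ forces $E_g(p) = 0$ at those same primes, so $n!_g$ is finite as well and the divisibility relates genuine natural numbers. I expect no real obstacle here—the entire content is the monotonicity of the floor applied to the reciprocal inequality—so the one thing to watch is keeping the direction of the inequality straight after inverting, which is precisely where the positivity of $f$ and $g$ over $\mathbb{P}$ is used.
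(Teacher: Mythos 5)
Your proof is correct and is precisely the argument the paper intends: the paper states Corollary \ref{cor11} without proof as an immediate consequence of Definition \ref{def4}, namely the prime-by-prime comparison of exponents via $1/g(p) \leq 1/f(p)$ and monotonicity of the floor function. Your additional remark on well-definedness (finiteness of $n!_g$ following from that of $n!_f$) is a sensible bit of care that the paper leaves implicit.
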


As an example, if $f(x) = x-1$ and $g(x) = x$, we have $n!_f = (n+1)!_\mathbb{P}$ and $n!_g = n!$, so that $n! \, | \, (n+1)!_\mathbb{P}$. Finally, we note that, since the Bhargava factorial $n!_S$ is a multiple of $n!$ for any set $S \subseteq \mathbb{Z}$, if we restrict the classes of $f$ to functions that satisfy $f(p) \leq p$ for all primes $p$, by Corollary \ref{cor11}, then $n!_f$ is a multiple of $n!$.

We now prove Theorem \ref{thm8}.
\begin{proof}
Suppose that the sum $$\sum_{p \in \mathbb{P}} \log p \left\lfloor \frac{n}{f(p)} \right\rfloor$$
converges to $\log \,  k$ for some natural number $k$. Let $b_k = \left\lfloor n/(f(p)p^k) \right\rfloor$; then $$a_k = \prod_{p \in \mathbb{P}} p^{\left\lfloor \frac{n}{f(p)p^k} \right\rfloor},$$
and $a_0$ is finite by our assumption, so that, by Lemma \ref{lem9}, we have then $n!_f = a_1a_2 \cdots a_{N-1}$, implying the naturality of $n!_f$.

Assume $n!_f$ is a natural number; by the fundamental theorem of arithmetic, $n!_f = p_1^{a_1}p_2^{a_2}\cdots p_r^{a_r}$, so that the $f$-factorial has the closed form
\begin{equation} \label{firstterm}
\sum_{p \in \mathbb{P}} \log p \sum_{k=0}^\infty \left\lfloor \frac{n}{f(p)p^k} \right\rfloor  = \log \prod_{1 \leq i \leq r} p_i^{a_i}. \end{equation}
Since the sum converges, and the right-hand sum in (\ref{firstterm}) is a logarithm of a natural number, all of the terms in the right-hand sum are finite, including the sum
$$\sum_{p \in \mathbb{P}} \log p \left\lfloor \frac{n}{f(p)} \right\rfloor.$$
As a consequence of convergence, the terms must tend to 0; in other words, for any $\epsilon > 0$, there is some natural number, call it $n_f$, such that for all primes $p \geq n_f$, we have $\left\lfloor n/f(p) \right\rfloor < \epsilon/\log p$. We choose $n_f$ large enough such that there is an $\epsilon$ satisfying $\epsilon < \log p$; then $ \left\lfloor n/f(p) \right\rfloor = 0$, so that the sum is
$$\sum_{p \in \mathbb{P}} \log p \left\lfloor \frac{n}{f(p)} \right\rfloor = \log \prod_{p < n_f} p^{\left\lfloor \frac{n}{f(p)} \right\rfloor},$$
which implies (\ref{ffactorial}), as desired.
\end{proof}
As mentioned previously, we have the following relationship between the classes of $f$ that tend towards infinity and their $f$-factorials.
\begin{theorem}
If $f$ tends to infinity along $\mathbb{P}$, then $n!_f$ is a natural number for all $n$.
\end{theorem}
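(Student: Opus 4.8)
The plan is to reduce the claim directly to the equivalence established in Theorem \ref{thm8}: it suffices to show that, whenever $f$ tends to infinity along $\mathbb{P}$, the sum
$$\sum_{p \in \mathbb{P}} \log p \left\lfloor \frac{n}{f(p)} \right\rfloor$$
converges to $\log k$ for some natural number $k$, and then invoke the implication (1) $\Rightarrow$ (2) of that theorem.

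First I would fix $n$ and exploit the growth hypothesis on $f$. Since $f(p) \to \infty$ as $p \to \infty$ along the primes, for the given value of $n$ there is a threshold $P$ such that $f(p) > n$ for every prime $p > P$. For each such $p$ we have $0 \le n/f(p) < 1$ (recall that $f$ is positive over $\mathbb{P}$ by standing assumption), whence $\left\lfloor n/f(p) \right\rfloor = 0$. Consequently every term of the series indexed by a prime $p > P$ vanishes, and the displayed sum is in fact a \emph{finite} sum over the primes $p \le P$.

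Having truncated the series, the remaining step is purely formal: a finite sum of nonnegative terms converges, and
$$\sum_{p \le P} \log p \left\lfloor \frac{n}{f(p)} \right\rfloor = \log \prod_{p \le P} p^{\lfloor n/f(p) \rfloor}.$$
The right-hand side is the logarithm of a finite product of prime powers with nonnegative integer exponents, hence the logarithm of a natural number $k$. Applying Theorem \ref{thm8}, I conclude that $n!_f$ is a natural number; since $n$ was arbitrary, this holds for all $n$.

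The argument carries no genuine obstacle: the only substantive observation is that the hypothesis $f \to \infty$ forces $\left\lfloor n/f(p) \right\rfloor$ to vanish for all sufficiently large primes, thereby collapsing the defining infinite product of Definition \ref{def4} to a finite one and reducing naturality to the already-proven Theorem \ref{thm8}. The main care required is to track the standing positivity of $f$ over $\mathbb{P}$, which guarantees that each exponent in the truncated range is a well-defined nonnegative integer.
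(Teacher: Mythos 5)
Your proposal is correct and follows essentially the same route as the paper: both use the hypothesis $f(p)\to\infty$ to force $\lfloor n/f(p)\rfloor = 0$ for all sufficiently large primes, collapse the series to a finite sum equal to $\log$ of a finite product of prime powers, and then invoke the implication (1) $\Rightarrow$ (2) of Theorem \ref{thm8}. The only cosmetic difference is that the paper phrases the truncation as a partition of $\mathbb{P}$ into the finite set $\{p : f(p)\leq n\}$ and its complement, whereas you truncate at a threshold prime $P$; the substance is identical.
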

\begin{proof}
Since $f$ tends to infinity along $\mathbb{P}$, for every $n$, there exists a prime $p'$ such that, for primes $p \geq p'$, $f(p) > n$. Partition $\mathbb{P} = \mathbb{P}_f \cup \mathbb{P}_f'$, where
$$\mathbb{P}_f = \{p \in \mathbb{P} \, | \, f(p) \leq n\},$$
and $\mathbb{P}_f'$ is its complement. Observe that $\mathbb{P}_f$ is finite, i.e., $|\mathbb{P}_f| \leq p'$; then the sum can be decomposed as
\begin{equation} \label{eqn4}
\sum_{p \in \mathbb{P}} \log p \left\lfloor \frac{n}{f(p)} \right\rfloor = \log \prod_{p \in \mathbb{P}_f}  p^{\left\lfloor \frac{n}{f(p)} \right\rfloor}.\end{equation}
Since the term on the right in (\ref{eqn4}) is a natural number, by Theorem \ref{thm8}, we have $n!_f$ is a natural number, as desired.
\end{proof}

From this point forward, we will assume $n!_f$ is a natural number. We will conclude this section by observing, for real-valued functions $f$ satisfying $0 < f(p) \leq p$ for prime $p$, like the Bhargava factorial, is an example of an abstract factorial \cite{minga}.

\begin{definition}
An abstract (or generalized) factorial is a function $!_a: \mathbb{N} \to \mathbb{Z}_+$ that satisfies the following conditions:
\begin{enumerate}
\item $0!_a = 1$.
\item For every nonnegative integers $n$, $k$ such that $0 \leq k \leq n$, the generalized binomial coefficients $$\binom{n}{k}_a = \frac{n!_a}{(n-k)!_ak!_a} \in \mathbb{Z}_+.$$
\item For every natural number $n$, $n!$ divides $n!_a$.
\end{enumerate}
For $n!_f$, this follows from the fact, by construction, $0!_f = 1$ and the second and third conditions follow from Corollary 10 and 11, respectively. As a consequence of being an abstract factorial, $n!_f$ admits various interesting properties, including that the sum
$$\sum_{n=0}^\infty \frac{1}{n!_f}$$
is irrational and that no integer $k \geq 2$ satisfies $k!_f = (k+1)!_f = (k+2)!_f$. In addition, open questions about abstract factorials extend to $n!_f$, including whether there exists a function $\phi$ of bounded variation on $[0,\infty)$ such that
$$n!_f = \int_0^\infty x^n d\phi(x).$$
\end{definition}

\section{Asymptotic Formula.} \label{sec3}
The question of when $n!_f$ admits an asymptotic expansion for arbitrary $f$ seems out of reach due to the various classes of functions $f$ that we would need to consider; as an example, if $f(x) = \sqrt{x}$, its $f$-factorial would not admit an asymptotic expansion via Theorem \ref{thm13} since it does not satisfy the criteria described in its premise and thus we would need a stronger method of approximation. For our purposes, we will consider maps that can be approximated ``well enough" by linear maps in the following sense:
\begin{theorem} \label{thm13}
Let $f$ be real-valued and positive over $\mathbb{P}$. Suppose there exists a positive integer $\alpha$ and a real constant $ M \geq 0$ such that
$$0 \leq \frac{1}{f(p)} -\frac{\alpha}{p} \leq \frac{M}{p^2}.$$
Then
$$\log n!_f = \log (\alpha n)!+ \beta_fn +o(n),$$
for some nonnegative constant $\beta_f$ depending on $f$.
\end{theorem}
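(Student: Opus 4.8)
The plan is to compare $n!_f$ against $(\alpha n)!$ by realizing the latter as an $f$-factorial itself. By Legendre's formula (as in Example~\ref{ex5}), the linear map $g(x)=x/\alpha$ gives $n!_{g}=(\alpha n)!$, because the exponent of $p$ is $\sum_{k\ge0}\lfloor \alpha n/p^{k+1}\rfloor=\sum_{j\ge1}\lfloor \alpha n/p^{j}\rfloor=v_p((\alpha n)!)$. The hypothesis $\frac{1}{f(p)}\ge\frac{\alpha}{p}$ is exactly $f(p)\le g(p)$, so Corollary~\ref{cor11} gives $(\alpha n)!\mid n!_f$ and hence $\Delta:=\log n!_f-\log(\alpha n)!\ge 0$. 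Setting $x_k=\alpha n/p^{k+1}$ and $\delta_k=\frac{n}{p^k}\bigl(\frac{1}{f(p)}-\frac{\alpha}{p}\bigr)\ge 0$, so that $x_k+\delta_k=n/(f(p)p^k)$, I would record
\[
\Delta=\sum_{p\in\mathbb{P}}\log p\sum_{k\ge0}\Bigl(\lfloor x_k+\delta_k\rfloor-\lfloor x_k\rfloor\Bigr).
\]

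Next I would extract the linear term. Writing $\lfloor y\rfloor=y-\{y\}$ splits each summand as $\delta_k+(\{x_k\}-\{x_k+\delta_k\})$. Summing the $\delta_k$ geometrically in $k$ gives
\[
\sum_{p}\log p\sum_{k\ge0}\delta_k=n\sum_{p}\frac{p\log p}{p-1}\Bigl(\frac{1}{f(p)}-\frac{\alpha}{p}\Bigr)=:\beta_f\,n,
\]
and $0\le \frac{1}{f(p)}-\frac{\alpha}{p}\le M/p^2$ makes the terms $O(\log p/p^2)$, so $\beta_f$ is finite and nonnegative; one checks that it recovers the constants $C$ and $\beta$ of Theorems~\ref{thm2} and~\ref{thm3}. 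It then remains to prove that the fractional remainder $E:=\sum_{p}\log p\sum_{k\ge0}(\{x_k\}-\{x_k+\delta_k\})$ is $o(n)$.

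To control $E$ I would fix a threshold $T=\sqrt{Cn}$ with $C$ a large constant depending on $\alpha,M$, and split the primes there. For $p\le T$, each remainder lies in $(-1,1)$; only $O(\log n/\log p)$ indices $k$ are active (i.e.\ have $n/(f(p)p^k)\ge1$), while over the inactive indices the remainder equals $-\delta_k$ and sums to $O(1)$, so the inner sum is $O(\log n/\log p)$ and $\sum_{p\le T}\log p\cdot O(\log n/\log p)=O(\pi(T)\log n)=O(\sqrt n\log n)=o(n)$. For $p>T$ one checks $n/(f(p)p)<1$, so every term with $k\ge1$ vanishes and $\delta_0<1$; the only surviving jump is $\lfloor n/f(p)\rfloor-\lfloor \alpha n/p\rfloor\in\{0,1\}$, which equals $1$ exactly when $p$ \emph{crosses}, meaning $\lfloor n/f(p)\rfloor>\lfloor \alpha n/p\rfloor$. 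The large-prime part of $E$ is then the weighted crossing count $\sum\log p$ over crossing $p>T$ minus $\sum_{p>T}\log p\sum_{k\ge0}\delta_k$, and the latter is at most $nM\sum_{p>T}\log p/p^{2}=O(\sqrt n\log n)=o(n)$.

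The crux is the weighted crossing count. A priori a positive proportion of primes could cross, which would inflate it to $\Theta(n)$; the resolution—and the place where the quadratic decay $\frac{1}{f(p)}-\frac{\alpha}{p}=O(1/p^2)$ is indispensable—is that each crossing occupies only a bounded window of primes. Indeed $p$ crosses at the integer $m=\lfloor \alpha n/p\rfloor+1$ precisely when $p\in(\alpha n/m,\ \alpha n/(m-\delta_0)]$, an interval of length $\alpha n\,\delta_0/(m(m-\delta_0))$; since $\delta_0\le nM/p^2\le Mm^2/(\alpha^2 n)$ throughout this range, the length is $O(M/\alpha)=O(1)$, so each $m$ accounts for $O(1)$ crossing primes. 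As $m$ runs through $1,\dots,O(\sqrt n)$ and every prime carries weight $\log p=O(\log n)$, the crossing count is $O(\sqrt n\log n)=o(n)$. Combined with the small-prime estimate this yields $E=o(n)$ and therefore $\Delta=\beta_f n+o(n)$, as claimed. I expect this crossing count to be the main obstacle: it is precisely the step that collapses for slowly-approximable maps such as $f(x)=\sqrt{x}$, where no constant $\alpha$ and exponent $2$ can bracket $1/f$.
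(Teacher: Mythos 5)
Your proof is correct, and it takes a genuinely different route from the paper. The paper works measure-theoretically: it writes $\frac{1}{n}(\log n!_f-\log(\alpha n)!)$ as a double integral of functions $h_n(p,k)$ against counting measure, dominates them using the hypothesis, and invokes the dominated convergence theorem twice, with the key technical input being Lemma~\ref{lem14} --- a comparison of two weighted floor sums proved via the $f$-Chebyshev functions $\vartheta_f,\psi_f$, Riemann sums converging to improper integrals, and Dusart's bound $\vartheta(x)<Cx$. You instead make the exact algebraic split $\lfloor x_k+\delta_k\rfloor-\lfloor x_k\rfloor=\delta_k+(\{x_k\}-\{x_k+\delta_k\})$, which isolates the linear term $\beta_f n$ (with exactly the paper's constant $\beta_f=\sum_p\frac{p\log p}{p-1}\bigl(\frac{1}{f(p)}-\frac{\alpha}{p}\bigr)$ from (\ref{beta})) in one line, and then you must only show the fractional-part remainder is $o(n)$. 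Your split at $T=\sqrt{Cn}$ does this with purely elementary tools: the small primes contribute $O(\pi(T)\log n)$ trivially, and for large primes the problem reduces to the weighted crossing count, which you correctly bound by observing that the quadratic hypothesis forces the crossing primes for each integer level $m$ into an interval of bounded length (the minor self-referentiality of $\delta_0$ in defining that interval is harmless, since your uniform bound $\delta_0\le Mm^2/(\alpha^2 n)$ on the range $p>\alpha n/m$ lets you replace it by a fixed interval). What your approach buys: it is self-contained (no measure theory, no Chebyshev-function machinery, no Dusart estimate --- nothing beyond $\pi(T)\le T$ and $\sum_{p>T}\log p/p^2=O(\log T/T)$), and it is quantitative, yielding the stronger error term $O(\sqrt{n}\log n)$ in place of $o(n)$. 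What the paper's approach buys: the $f$-Chebyshev framework and the DCT structure make explicit how the hypothesis acts as a domination condition and connect the result to the classical analytic apparatus around $\vartheta$ and $\psi$, which is the lens the paper uses throughout Section~\ref{sec3}. Your closing remark that the crossing-count step is precisely what fails for $f(x)=\sqrt{x}$ is also consistent with the paper's own caveat about that example.
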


In proving Theorem \ref{thm13}, we establish the following lemma.

\begin{lemma} \label{lem14}
For any natural number $n$, we have
\begin{align*}
\sum_{p \in \mathbb{P}} \log p\sum_{k \geq 0}\left\lfloor\frac{\alpha n}{p^{k+1}} + \frac{Mn}{p^{k+2}} \right\rfloor = \sum_{p \in \mathbb{P}} \log p \left\lfloor\frac{n}{p-1}\left(\alpha + \frac{M}{p}\right) \right\rfloor + o(n).
\end{align*}
\end{lemma}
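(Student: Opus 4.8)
The plan is to reduce the claimed identity to a sum of nonnegative per-prime ``defects'' and then to estimate that sum by splitting the primes at $\sqrt n$. The starting point is the elementary fact that $\lfloor y/N\rfloor=\lfloor\lfloor y\rfloor/N\rfloor$ for every real $y\ge 0$ and every positive integer $N$. Setting $m_p:=\lfloor n(\alpha+M/p)\rfloor$ and observing that $\frac{\alpha n}{p^{k+1}}+\frac{Mn}{p^{k+2}}=\frac{n(\alpha+M/p)}{p^{k+1}}$, this identity (with $N=p^{k}$ and then with $N=p-1$) rewrites the two inner expressions as $\sum_{k\ge 1}\lfloor m_p/p^{k}\rfloor$ on the left and $\lfloor m_p/(p-1)\rfloor$ on the right. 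Since $\sum_{k\ge 1}m_p/p^{k}=m_p/(p-1)$ is a geometric series, the right-hand inner term is exactly $\lfloor\sum_{k\ge 1}m_p/p^{k}\rfloor$, so the whole content of the lemma is the estimate
\[\sum_{p\in\mathbb P}\log p\,t_p=o(n),\qquad t_p:=\Big\lfloor\sum_{k\ge 1}\frac{m_p}{p^{k}}\Big\rfloor-\sum_{k\ge 1}\Big\lfloor\frac{m_p}{p^{k}}\Big\rfloor.\]

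First I would record that $t_p\ge 0$ (a floor of a sum dominates the sum of the floors) and that, writing each floor as its value minus its fractional part, $t_p=\sum_{k\ge 1}\{m_p/p^{k}\}-\{m_p/(p-1)\}\le \log_p m_p+2$, the bound coming from the number of indices $k$ with $p^{k}\le m_p$ together with a geometric tail. Two coarse facts localize the sum: uniformly in $p$ one has $\alpha n\le m_p\le(\alpha+M/2)n$, so $m_p=\Theta(n)$; and $t_p=0$ as soon as $p-1>m_p$, so only primes $p=O(n)$ contribute. Writing $m_{\max}=\lceil(\alpha+M/2)n\rceil$, I then split the range at $T=2\lceil\sqrt{m_{\max}}\,\rceil$.

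For the small primes $p\le T$ the crude bound already suffices. Because $\log p\cdot\log_p m_p=\log m_p$, one gets $\sum_{p\le T}\log p\,t_p\le\sum_{p\le T}(\log m_p+2\log p)\le\pi(T)\log m_{\max}+2\theta(T)$, where $\pi$ and $\theta$ are the prime-counting and Chebyshev functions; by the standard elementary estimates this is $O(\sqrt n)$, hence $o(n)$. The delicate range is $p>T$. There $p^{2}>m_p$, so only the term $k=1$ survives and $t_p=\lfloor m_p/(p-1)\rfloor-\lfloor m_p/p\rfloor$; moreover $m_p/(p-1)-m_p/p=m_p/(p(p-1))<1$, so $t_p\in\{0,1\}$, with $t_p=1$ exactly when some integer $j$ lies in the interval $(m_p/p,\;m_p/(p-1)]$.

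The main obstacle is precisely this large-prime contribution, because the trivial bound $t_p\le 1$, summed over the $\sim n/\log n$ primes below $m_{\max}$ and weighted by $\log p\approx\log n$, only gives $O(n)$; the point is that the event $t_p=1$ is rare, and to see this I would count by the crossed integer $j$ rather than by $p$. For a fixed $j$, the condition $j\in(m_p/p,\,m_p/(p-1)]$ is equivalent to $0<jp-m_p\le j$, and the function $p\mapsto jp-m_p$ is increasing with derivative at least $j$ (even after accounting for the slowly varying term $Mn/p$ hidden in $m_p$), so the admissible $p$ are confined to an interval of bounded length and contain $O(1)$ primes. Since $p>T$ forces $j\le m_p/(p-1)=O(\sqrt n)$, there are only $O(\sqrt n)$ admissible values of $j$, each contributing $O(1)$ primes of size at most $m_{\max}$; hence $\sum_{p>T}\log p\,t_p=O(\sqrt n\,\log n)=o(n)$. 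Adding the two ranges yields $\sum_{p}\log p\,t_p=o(n)$, which is the lemma. The step demanding genuine care in a full writeup is the derivative estimate keeping each $j$-window of bounded length despite the $p$-dependence of $m_p$; this is exactly where the hypothesis $0\le \frac1{f(p)}-\frac{\alpha}{p}\le \frac{M}{p^2}$, the near-linearity of $f$, enters.
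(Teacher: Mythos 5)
Your proof is correct, but it takes a genuinely different route from the paper. The paper's argument is analytic: it introduces the $f$-Chebyshev functions $\vartheta_f$ and $\psi_f$, rewrites each side as $\sum_m \psi_f(\alpha n/m)$ and $\sum_m \vartheta_h(\alpha n/m)$ for suitable $f,h$, interprets $\frac1n\sum_m$ as a Riemann sum converging to improper integrals $\alpha\int (\psi_f-\vartheta_h)(u)/u^2\,du$, and then shows these integrals cancel using the identity $\vartheta_f=\vartheta\circ f^{-1}$, Tonelli's theorem, and Dusart's bound $\vartheta(x)<Cx$. You instead make the problem purely arithmetic: the nested-floor identity $\lfloor y/N\rfloor=\lfloor\lfloor y\rfloor/N\rfloor$ collapses both sides onto the single integer $m_p=\lfloor n(\alpha+M/p)\rfloor$, the geometric series $\sum_{k\ge1}m_p/p^k=m_p/(p-1)$ identifies the discrepancy as the superadditivity defect $t_p\ge 0$ of the floor function, and the lemma becomes $\sum_p \log p\, t_p=o(n)$. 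Your split at $T\asymp\sqrt n$ is sound: below $T$ the bound $t_p\le \log_p m_p+2$ gives $O(\sqrt n)$; above $T$ only $k=1$ survives, $t_p\in\{0,1\}$, and your key counting-by-crossed-integer argument works --- the condition $0<jp-m_p\le j$ forces $-1<jp-n\alpha-Mn/p\le j$, and since that function of $p$ is increasing with derivative at least $j\ge1$, each $j$ captures an interval of length at most $(j+1)/j\le 2$, hence at most two primes, over the $O(\sqrt n)$ admissible values of $j$. I checked the one step you flagged as delicate (absorbing the floor in $m_p$ into the window estimate), and it goes through with only a bounded widening of the window. What your approach buys: it is elementary (no integration theory, no Dusart, even trivial bounds on $\pi(T)$ and $\theta(T)$ suffice) and it is quantitatively stronger, giving an explicit error $O(\sqrt n\log n)$ rather than $o(n)$. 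What the paper's approach buys: the Chebyshev-function machinery it sets up is reused almost verbatim later in the proof of Theorem \ref{thm13} to establish the companion asymptotic (\ref{eqn13}), so it serves as a template rather than a one-off argument; your method could likely be adapted there too, but that adaptation is not automatic from what you wrote.
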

We will first define the first $f$-Chebyshev function and second $f$-Chebyshev function.
\begin{definition} \label{def15}
 Let $f$ be a real-valued map such that $f$ tends to infinity and is positive over $\mathbb{P}$. Then the \emph{first} $f$-\emph{Chebyshev function} and \emph{second} $f$-\emph{Chebyshev function} are defined, respectively, as
\begin{align*}
    \vartheta_f(x) &= \sum_{f(p) \leq x} \log p \quad \text{and} \quad  \psi_f(x) = \sum_{k\geq 0}\vartheta_{g_{k}}(x),
\end{align*}
where $g_k := g_k(p) = f(p)p^k.$
\end{definition}
Observe that, since $f$ is positive and tends to infinity over $\mathbb{P}$, for sufficiently small $x$, both $\vartheta_f(x)$ and $\psi_f(x)$ are zero. Furthermore, we have the following corollary.
\begin{corollary} \label{cor16}
\textit{Let} $f$ \textit{be strictly increasing and bijective over} $\mathbb{P}$\textit{; then} $\vartheta_f = \vartheta \circ f^{-1}$\textit{.}
\end{corollary}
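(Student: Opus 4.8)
The plan is to treat this as a straightforward reindexing of the defining sum, using the monotonicity and invertibility of $f$. First I would record the two objects being compared. By Definition \ref{def15} the first $f$-Chebyshev function is $\vartheta_f(x) = \sum_{f(p) \leq x} \log p$, while the ordinary first Chebyshev function is the special case $f(x)=x$ (the identity map of Example \ref{ex5}), namely $\vartheta(y) = \sum_{p \leq y} \log p$. Since $f$ is bijective, its inverse $f^{-1}$ is defined on all of $\mathbb{R}$, so the composite $\vartheta \circ f^{-1}$ makes sense; and since $f$ is strictly increasing, so is $f^{-1}$.

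The core of the argument is a single equivalence. For any prime $p$ and any real $x$, applying the strictly increasing bijection $f^{-1}$ to both sides of an inequality preserves its direction, so
$$f(p) \leq x \iff p \leq f^{-1}(x).$$
Here the \emph{increasing} hypothesis is what guarantees the inequality does not reverse, and \emph{bijectivity} is what guarantees $f^{-1}(x)$ is well-defined for every $x$ at which we wish to evaluate. From this equivalence the two index sets $\{p \in \mathbb{P} : f(p) \leq x\}$ and $\{p \in \mathbb{P} : p \leq f^{-1}(x)\}$ coincide, and therefore the corresponding sums of $\log p$ agree term by term.

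Putting these together, for every $x$ we obtain
$$\vartheta_f(x) = \sum_{f(p) \leq x} \log p = \sum_{p \leq f^{-1}(x)} \log p = \vartheta\bigl(f^{-1}(x)\bigr) = (\vartheta \circ f^{-1})(x),$$
which is the claimed identity $\vartheta_f = \vartheta \circ f^{-1}$.

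I do not expect a genuine obstacle here, as the statement reduces to the order-preserving change of index just described. The only points requiring care are bookkeeping ones: confirming that strict monotonicity (rather than mere injectivity) is what licenses carrying the inequality through $f^{-1}$ without flipping it, and confirming that surjectivity is what lets $f^{-1}$ be evaluated at arbitrary $x$ so that $\vartheta \circ f^{-1}$ is defined over the whole range. Both are furnished directly by the hypothesis that $f$ is strictly increasing and bijective over $\mathbb{P}$.
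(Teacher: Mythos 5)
Your proof is correct, and it is exactly the routine verification the paper has in mind: the paper states Corollary \ref{cor16} without any proof, treating it as immediate from Definition \ref{def15}. Your unwinding of the definitions via the equivalence $f(p) \leq x \iff p \leq f^{-1}(x)$, with strict monotonicity preserving the inequality's direction and bijectivity guaranteeing $f^{-1}$ is defined, supplies precisely the argument the paper leaves implicit.
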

We will now prove Lemma \ref{lem14}.
\begin{proof}
Consider the sum
$$\sum_{p \in \mathbb{P}} \log p\sum_{k \geq 0}\left\lfloor\frac{\alpha n}{p^{k+1}} + \frac{Mn}{p^{k+2}} \right\rfloor.$$
We have
\begin{align*}
\sum_{p \in \mathbb{P}} \log p \sum_{k \geq 0} \left\lfloor\frac{\alpha n}{p^{k+1}} + \frac{Mn}{p^{k+2}} \right\rfloor &= \sum_{m \geq 1, k \geq 0} \sum_{m \leq \frac{n(\alpha p+ M)}{p^{k+2}}  < m+1} \log p \\
&=\sum_{m \geq 1, k \geq 0} m \left(\vartheta_{g_k}\left(\frac{\alpha n}{m}\right) - \vartheta_{g_k}\left(\frac{\alpha n}{m+1}\right)\right) \\
&= \sum_{m \geq 1}m\left(\psi_{f}\left(\frac{\alpha n}{m}\right) - \psi_{f}\left(\frac{\alpha n}{m+1}\right)\right) \\
&= -\lim_{l \to \infty} l\psi_f\left(\frac{\alpha n}{l+1}\right) + \sum_{m \geq 1} \psi_f\left(\frac{\alpha n}{m}\right).
\end{align*}
For large enough $l$, we have $\psi_f\left(\alpha n/l\right) = 0$, so that
$$\sum_{p \in \mathbb{P}} \log p \sum_{k \geq 0} \left\lfloor\frac{\alpha n}{p^{k+1}} + \frac{Mn}{p^{k+2}} \right\rfloor = \sum_{m \geq 1} \psi_f\left(\frac{\alpha n}{m}\right),$$
where $f(x) = \alpha x^2/(\alpha x + M)$. Similarly, for
$$\sum_{p \in \mathbb{P}} \log p \left\lfloor\frac{n}{p-1}\left(\alpha + \frac{M}{p}\right) \right\rfloor,$$
we have
\begin{align*}
\sum_{p \in \mathbb{P}} \log p \left\lfloor\frac{n}{p-1}\left(\alpha + \frac{M}{p}\right) \right\rfloor &= \sum_{m \geq 1} m \sum_{m \leq \frac{n(\alpha p+ M)}{p(p-1)}  < m+1} \log p \\
&=\sum_{m \geq 1} m \left(\vartheta_{h}\left(\frac{\alpha n}{m}\right) - \vartheta_{h}\left(\frac{\alpha n}{m+1}\right)\right) \\
&=\sum_{m \geq 1} \vartheta_{h}\left(\frac{\alpha n}{m}\right)
\end{align*}
where $h(x) = \alpha x(x-1)/(\alpha x + M).$ Thus, establishing Lemma \ref{lem14} is equivalent to showing
$$\lim_{n \to \infty} \frac{1}{n} \sum_{m \geq 1} \psi_f\left(\frac{\alpha n}{m}\right) - \vartheta_{h}\left(\frac{\alpha n}{m}\right) = 0.$$
Since $\psi_f\left(\alpha n/m\right)$ and $\vartheta_{h}\left(\alpha n/m\right)$ are zero for $m >\left(\alpha+ \frac{M}{2}\right)n$, we have that the sum converges to the improper Riemann integral
\begin{align*}
\lim_{n \to \infty} &\frac{1}{n} \sum_{m \leq (\alpha+ \frac{M}{2})n } \psi_f\left(\frac{\alpha n}{m}\right) - \vartheta_{h}\left(\frac{\alpha n}{m}\right) = \frac{\alpha}{q}\int_0^1 \psi_f\left(\frac{q}{x}\right) - \vartheta_{h}\left(\frac{q}{x}\right) \, dx.
\end{align*}
Letting $u = q/x$, where $q = 2\alpha/(2\alpha+M)$, we have
\begin{align*}
\frac{\alpha}{q}\int_0^1 \psi_f\left(\frac{q}{x}\right) - \vartheta_{h}\left(\frac{q}{x}\right) \, dx = \alpha \int_{q}^\infty \frac{\psi_f(u) - \vartheta_{h}(u)}{u^2} \, du.
\end{align*}
We first rewrite the integral as
\begin{align*}
& \int_{q}^\infty \frac{\psi_f(u) - \vartheta_{h}(u)}{u^2} \, du =  \int_{q}^\infty \frac{\psi_f(u) - \vartheta_{f}(u)}{u^2} \, du - \int_{q}^\infty \frac{\vartheta_{h}(u)-\vartheta_f(u)  }{u^2} \, du,
\end{align*}
so that we may focus on each of the respective integrals
\begin{align}
&\alpha \int_{q}^\infty \frac{\psi_f(u) - \vartheta_{f}(u)}{u^2} \, du, \label{eqn5} \\
&\alpha \int_{q}^\infty \frac{\vartheta_{h}(u) - \vartheta_{f}(u)}{u^2} \, du. \label{eqn6}
\end{align}
Each of these improper integrals is convergent; to see this, let's first consider (\ref{eqn5}). From the definition of $\vartheta_f(x)$, we have
$$\vartheta_f(x) \leq \sum_{p \leq \alpha x} \log \alpha x \leq \alpha x \log \alpha x,$$
so that
\begin{align*}
0 \leq \frac{\psi_f(x) - \vartheta_f(x)}{x} &\leq \frac{1}{x}\sum_{2 \leq k \leq \log_2 \alpha x} (\alpha x)^{1/k} \log (\alpha x)^{1/k} \leq  \frac{\sqrt{\alpha} \log^2 \alpha x}{2  \log 2 \sqrt{x}},
\end{align*}
which, by the monotone convergence theorem, implies the convergence of (\ref{eqn5}). As for (\ref{eqn6}), observe that
\begin{equation} \label{eqn7}
h^{-1}(x) = \frac{x+1}{2}+ \sqrt{\frac{(x+1)^2}{4}+\frac{xM}{\alpha}} \quad \text{and} \quad f^{-1}(x) = \frac{x}{2} + \sqrt{\frac{x^2}{4} + \frac{xM}{\alpha}}.
\end{equation}
Since $f \geq h$, we have $\vartheta_{h} \geq \vartheta_f$; using Corollary \ref{cor16}, this implies
\begin{align*}
(\vartheta_h - \vartheta_f)(x) = \sum_{f^{-1}(x) < p \leq h^{-1}(x)} \log p &\leq \log \left(x+\sqrt{\frac{Mx}{\alpha}}+1 \right) \sum_{f^{-1}(x) < p \leq h^{-1}(x)} 1 \\
&\leq \left(\frac{3}{2}+\sqrt{x}\right) \log \left(x+\sqrt{\frac{Mx}{\alpha}}+1 \right),
\end{align*}
which, again, by the monotone convergence theorem, implies the convergence of (6). Observe that we may extend the lower limit of integration in both (\ref{eqn5}) and (\ref{eqn6}) to $0$ since the integrands are zero for sufficiently small $u$. Consider (\ref{eqn5}); recalling $f(x) = \alpha x^{2}/(\alpha x+M)$ and applying Corollary \ref{cor16}, we have the representation
\begin{align*}
\alpha  \int_0^\infty \frac{\psi_f(u) - \vartheta_f(u)}{u^2} \, du  &= \alpha \int_{0}^\infty \frac{1}{u^2} \sum_{k \geq 1} \vartheta_{g_k}(u) \, du \\
&= \alpha \sum_{k \geq 1} \int_0^\infty \frac{\vartheta_{g_k}(u)}{u^2} \, du \\
&=\alpha \sum_{k \geq 1} \int_0^\infty \vartheta(u) \frac{f'(u)u^k+kf(u)u^{k-1}}{f^2(u)u^{2k}}\, du \\
&=\alpha \int_0^\infty \vartheta(u) \left(\frac{f'(u)(u-1)+f(u)}{f^2(u)(u-1)^2}\right)\, du \\
&=\int_0^\infty \frac{\vartheta(u)}{u^3} \left(\frac{2\alpha u^2+(3M-\alpha)u-2M}{(u-1)^2}\right)\, du.
\end{align*}

The justification for the interchange between the summation and integral signs follows from Tonelli's theorem since $\vartheta_{g_k}(u)/u^2$ is nonnegative for every positive real $u$ and natural number $k$. Now consider (\ref{eqn6}); let $n$ be a natural number and consider
$$\alpha \int_{0}^n \frac{\vartheta_{h}(u) - \vartheta_{f}(u)}{u^2} \, du.$$
Recall (\ref{eqn7}); since $h^{-1} \geq f^{-1}$, applying Corollary \ref{cor16}, the following difference has the representation
\begin{align*}
\alpha \int_{0}^n & \frac{\vartheta_{h}(u) - \vartheta_{f}(u)}{u^2} \, du - \alpha \int_{f^{-1}(n)}^{h^{-1}(n)} \vartheta(u)\frac{f'(u)}{f^2(u)} \, du\\ &= \alpha \int_{0}^{h^{-1}(n)} \vartheta(u)\left(\frac{h'(u)}{h^2(u)} - \frac{f'(u)}{f^2(u)} \right) \, du \\
&= \int_{0}^{h^{-1}(n)} \frac{\vartheta(u)}{u^3}\left(\frac{2\alpha u^2+(3M-\alpha)u-2M}{(u-1)^2}\right) \, du.
\end{align*}
It remains to show that
\begin{equation} \label{eqn8}
\alpha \int_{f^{-1}(n)}^{h^{-1}(n)} \vartheta(u)\frac{f'(u)}{f^2(u)} \, du \to 0,
\end{equation}
for arbitrarily large $n$. In \cite{dusart}, P. Dusart showed that $\vartheta(x) < Cx$ for some $C > 1$; thus we have
 \begin{align*}
\alpha \int_{f^{-1}(n)}^{h^{-1}(n)} \vartheta(u)\frac{f'(u)}{f^2(u)} \, du &< C \int_{f^{-1}(n)}^{h^{-1}(n)} \frac{\alpha u^2+2Mu}{u^3} \, du \leq  C\alpha \ln \frac{h^{-1}(n)}{f^{-1}(n)} + \frac{K}{n},
\end{align*}
for some constant $K > 0$. Then, for large $n$, (\ref{eqn8}) follows. As a consequence, the integrals in (\ref{eqn5}) and (\ref{eqn6}) cancel each other, effectively proving Lemma \ref{lem14}.
\end{proof}

Now that we have established Lemma \ref{lem14}, we now prove Theorem \ref{thm13}.

\begin{proof}
Let $\mu$ denote the counting measure; define a sequence of functions $\{h_{n}(p,k)\}_{n\geq 1}$ in two variables as
$$h_{n}(p,k) = \frac{\log p}{n} \left( \left\lfloor \frac{n}{f(p)p^k} \right\rfloor - \left\lfloor \frac{\alpha n}{p^{k+1}} \right\rfloor \right).$$
Since $\lfloor nx\rfloor /n \to x$ pointwise for any real $x$, it follows that $h_{n}(p,k) \to h(p,k)$ pointwise, where
$$h(p,k) = \frac{\log p}{p^k}  \left( \frac{1}{f(p)}  -  \frac{\alpha}{p} \right).$$
We are interested in passing the limit under the integrals
$$\lim_{n \to \infty} \int_\mathbb{P} \int_{\mathbb{Z}_{\geq 0}} h_{n}(p,k)\, d\mu(k)\, d\mu(p) = \int_\mathbb{P} \int_{\mathbb{Z}_{\geq 0}} h(p,k)\, d\mu(k)\, d\mu(p),$$
which will require a doubly application of the dominated convergence theorem. By our assumption, we have the series of inequalities
\begin{equation}\label{upperbound1}
    h_{n}(p,k) \leq \frac{\log p}{n} \left(\left\lfloor\frac{\alpha n}{p^{k+1}} + \frac{Mn}{p^{k+2}} \right\rfloor  - \left\lfloor \frac{\alpha n}{p^{k+1}} \right\rfloor\right) \leq \frac{\log p}{p^{k}}\left(\frac{\alpha n}{p} + \frac{Mn}{p^2}\right).
\end{equation}
The upper bound in (\ref{upperbound1}) is integrable; in particular, we have
\begin{align*}
\int_{\mathbb{Z}_{\geq 0}}  \frac{\log p}{p^k} \left(\frac{\alpha n}{p} + \frac{Mn}{p^2}\right) \, d\mu(k) &= 
\sum_{k \geq 0} \frac{\log p}{p^k} \left(\frac{\alpha n}{p} + \frac{Mn}{p^2}\right)\\
&= \frac{p\log p}{p-1} \left(\frac{\alpha n}{p} + \frac{Mn}{p^2}\right),
\end{align*}
so that, by the dominated convergence theorem, we can pass the limit under the inner integral. For the outer integral, we have
\begin{equation} \label{eqn10}
\int_{\mathbb{Z}_{\geq 0}} h_{n}(p,k)\, d\mu(k) = \frac{\log p}{n} \sum_{k \geq 0}  \left( \left\lfloor \frac{n}{f(p)p^k} \right\rfloor  - \left\lfloor \frac{\alpha n}{p^{k+1}} \right\rfloor  \right).
\end{equation}
Observe that (\ref{eqn10}) converges pointwise to $\displaystyle \int_{\mathbb{Z}_{\geq 0}} h(p,k) d\mu(k)$ since we have the bound
\begin{align*}
\left|\int_{\mathbb{Z}_{\geq 0}} h_{n}(p,k)\, d\mu(k) - \sum_{k \leq a_n}    \log p \left(\frac{1}{f(p)p^k}  -  \frac{\alpha}{p^{k+1}}  \right)\right|  \leq \frac{a_n\log p}{n},
\end{align*}
where $a_n = \log_p \mathrm{max}\{n/f(p), \alpha n/p\}$. By the squeeze theorem, we have
\begin{equation} \label{eqn11}
\int_{\mathbb{Z}_{\geq 0}} h_{n}(p,k)\, d\mu(k) \to\int_{\mathbb{Z}_{\geq 0}} h(p,k)\, d\mu(k) = \frac{p \log p}{p-1}\left(\frac{1}{f(p)} - \frac{\alpha}{p}\right).
\end{equation}
Define
$$g_n(p) = \frac{\log p}{n}\sum_{k \geq 0} \left\lfloor\frac{\alpha n}{p^{k+1}} + \frac{Mn}{p^{k+2}} \right\rfloor - \left\lfloor \frac{\alpha n}{p^{k+1}} \right\rfloor.$$
By our assumptions, $g_n(p)$ is an upper bound for (\ref{eqn10}) and converges pointwise to
$$g(p) = \frac{M\log p}{p(p-1)}. $$
To see this, notice the sum in $g_n(p)$ is nonzero for $k \leq b_n := \log_p (\mathrm{max}\{\alpha, M\} n)$; then, similar to (\ref{eqn10}), we have the inequality
\begin{equation} \label{eqn12}
\left|g_n(p) - \log p \sum_{k \leq b_n} \frac{M}{p^{k+2}} \right| \leq  \frac{b_n\log p}{n},
\end{equation}
so that, by the squeeze theorem, $g_n(p) \to g(p)$. We require that this limit also gives
$$\lim_{n \to \infty} \int_{\mathbb{P}} g_n(p) d\mu(p) = \int_{\mathbb{P}} g(p) d\mu(p).$$By Lemma \ref{lem14}, we have
\begin{align*}
\lim_{n \to \infty} \int_\mathbb{P} g_n(p) \, d\mu(p) 
&= \lim_{n \to \infty} \frac{1}{n} \sum_{p \in \mathbb{P}} \log p \sum_{k \geq 0}\left( \left\lfloor \frac{\alpha n}{p^{k+1}}+\frac{Mn}{p^{k+2}}\right\rfloor - \left\lfloor \frac{\alpha n}{p^{k+1}}\right\rfloor \right) \\
&= \lim_{n \to \infty} \frac{1}{n} \sum_{p \in \mathbb{P}} \log p \left( \left\lfloor \frac{n}{p-1}\left(\alpha+\frac{M}{p}\right)\right\rfloor - \left\lfloor \frac{\alpha  n}{p-1}\right\rfloor \right).
\end{align*}
We want to show that, for any natural number $n$, the following holds up to $o(n)$:
\begin{equation} \label{eqn13}
\sum_{p \in \mathbb{P}} \log p \left(\left\lfloor \frac{n}{p-1}\left(\alpha+\frac{M}{p}\right) \right\rfloor - \left\lfloor \frac{n}{p-1} \right\rfloor\right)\sim  \sum_{p \in \mathbb{P}} \log p \left\lfloor\frac{Mn}{p(p-1)} \right\rfloor.
\end{equation}
Fortunately, the approach to establishing this is akin to the proof of Lemma \ref{lem14}; observe that if 
\begin{equation*}
h = \frac{\alpha x(x-1)}{\alpha x + M}, \quad f_1 = \alpha(x-1), \, \,  \text{ and } \, \, f_2 = \frac{\alpha x(x-1)}{M},
\end{equation*}
the left-hand sum in (\ref{eqn13}) may be written as
\begin{equation*}
\lim_{n\to \infty} \frac{1}{n} \sum_{m \leq (\alpha+M/2)n} \left[\vartheta_{h}\left(\frac{\alpha n}{m}\right) -  \vartheta_{f_1}\left(\frac{\alpha n}{m}\right) - \vartheta_{f_2}\left(\frac{\alpha n}{m}\right)\right],
\end{equation*}
where each of the summands is zero for $m > (\alpha+M/2)n$. Taking the limit, we have the Riemann sum converges to the improper integral
\begin{equation*}
 \frac{\alpha}{q} \int_0^1 \left(\vartheta_h - \vartheta_{f_1}-\vartheta_{f_2}\right)\left(\frac{q}{x}\right) \, dx = \alpha \int_q^\infty \frac{\left(\vartheta_h - \vartheta_{f_1}- \vartheta_{f_2}\right)\left(u\right)}{u^2}\, du,
\end{equation*}
where $q = 2\alpha/(2\alpha + M)$ and the same $u$-substitution was applied in the proof of Lemma \ref{lem14}. Again, we may extend the lower limit of integration to $0$ since the integrand is zero for sufficiently small $u$. Consider
$$\alpha \int_0^n \frac{\vartheta_h\left(u\right) - \vartheta_{f_1}\left(u\right) - \vartheta_{f_2}\left(u\right)}{u^2}\, du.$$
Applying Corollary \ref{cor16}, we have
\begin{align*}
&\alpha \int_0^n \frac{\vartheta_h\left(u\right) - \vartheta_{f_1}\left(u\right) - \vartheta_{f_2}\left(u\right)}{u^2}\, du \\
&= \alpha \int_0^n \frac{\vartheta_h\left(u\right) - \vartheta_{f_1}\left(u\right)}{u^2} \, du - \alpha \int _0^n \frac{\vartheta_{f_2}\left(u\right)}{u^2}\, du \\
&= \alpha \int_0^{h^{-1}(n)} \vartheta(u) \left(\frac{h'(u)}{h^2(u)} - \frac{f_1'(u)}{f_1^2(u)}\right) \, du \\
&\qquad -\alpha \int_0^{f_2^{-1}(n)} \vartheta(u) \frac{f_2'(u)}{f_2^2(u)} \, du + \alpha \int_{f_1^{-1}(n)}^{h^{-1}(n)} \vartheta(u) \frac{f_1'(u)}{f_1^2(u)} \, du \\
&= M \left(\int_0^{h^{-1}(n)} - \int_0^{f_2^{-1}(n)}\right)\left( \frac{\vartheta(u)}{u^2} \frac{2u-1}{(u-1)^2} \, du  \right) + \int_{f_1^{-1}(n)}^{h^{-1}(n)} \frac{\vartheta(u)}{(u-1)^2} \, du.
\end{align*}
Again, similar to the proof of Lemma \ref{lem14}, we have
$$\int_{f_1^{-1}(n)}^{h^{-1}(n)} \frac{\vartheta(u)}{(u-1)^2} \, du \to 0,$$
so that we have
$$M \left(\int_0^{h^{-1}(n)} - \int_0^{f_2^{-1}(n)}\right)\left( \frac{\vartheta(u)}{u^2} \frac{2u-1}{(u-1)^2} \, du  \right)  \to 0,$$
implying (\ref{eqn12}).

Putting (\ref{eqn11}) and (\ref{eqn13}) together, by the dominated convergence theorem, we have
$$\lim_{n \to \infty} \int_\mathbb{P} \int_{\mathbb{Z}_{\geq 0}} h_{n}(p,k)\, d\mu(k)\, d\mu(p) = \int_\mathbb{P} \int_{\mathbb{Z}_{\geq 0}} h(p,k)\, d\mu(k)\, d\mu(p),$$
where
\begin{equation}\label{beta}\int_\mathbb{P} \int_{\mathbb{Z}_{\geq 0}} h(p,k)\, d\mu(k)\, d\mu(p) = \sum_{p \in \mathbb{P}}  \frac{p\log p}{p-1} \left( \frac{1}{f(p)} - \frac{\alpha}{p} \right).
\end{equation}
Denote (\ref{beta}) by $\beta_f$; from our assumptions, we have $\beta_f \geq 0$. Furthermore, we have
\begin{equation*}
    \sum_{p \in \mathbb{P}}  \frac{p\log p}{p-1} \left( \frac{1}{f(p)} - \frac{\alpha}{p} \right) \leq \sum_{p \in \mathbb{P}}\frac{M\log p}{p(p-1)} \leq 2M \sum_{p \in \mathbb{P}}\frac{\log p}{p^2} \leq -2M\zeta'(2),
    \end{equation*}
where $\zeta(s)$ is the classical Riemann zeta-function. By the monotone convergence theorem, $\beta_f$ is finite, which proves Theorem \ref{thm13}.
\end{proof}

\section{Applications.} \label{sec4}

We are interested in discussing applications of Theorem \ref{thm13} to various Legendre formulas, in particular focusing on the Bhargava factorials over the set of primes and the set associated with sequence A202367.

\subsection{Bhargava factorial over the set of primes.} 
In this section, we prove Theorem \ref{thm2}. Recall that the Bhargava factorial is given in Definition \ref{def1}.

\begin{proof}[Proof of Theorem \ref{thm2}]
Let $f(x) = x-1$. We find $M=2$ and $\alpha = 1$ satisfy the criteria of Theorem \ref{thm13}, where $n!_f = (n+1)!_\mathbb{P}$. We have
$$\log (n+1)!_\mathbb{P} = \log n!+ Cn + o(n),$$
as desired.
\end{proof}
\begin{table}[h!]
    \centering
    \caption{Table of values comparing $\log (n+1)!_\mathbb{P}$ and $\log n! + {Cn}$.}
    \label{tab:my_label}
    \begin{tabular}{ |p{1cm}||p{4cm}|p{4cm}|  }
    \hline
 $n$ & $\log (n+1)!_\mathbb{P}$ & $\log n!+ Cn$ \\
 \hline
 1   & .6931\ldots             & 1.2269\ldots \\
 2   & 3.1780\ldots            & 3.1470\ldots \\
 3   & 3.8712\ldots             & 5.4726\ldots \\
 4   & 8.6586\ldots          & 8.0859\ldots \\
 5   & 9.3518\ldots         & 10.9223\ldots \\
 6   & 14.8812\ldots       & 13.9410\ldots \\
 7   & 15.5744\ldots       & 17.1139\ldots \\
 8   & 21.0550\ldots    & 20.4203\ldots \\
 9   & 21.7482\ldots   & 23.8445\ldots \\
 10  & 26.6310\ldots   & 27.3741\ldots \\
 100   & 471.9704\ldots    & 480.6040\ldots \\
 1,000  & 7,119.5084\ldots   & 7,130.9600\ldots \\
 5,000  & 43,759.7980\ldots  & 43,726.0000\ldots  \\
 10,000 & 94,417.8375\ldots  & 94,378.6000\ldots \\
 \hline
\end{tabular}
\end{table} Interestingly, the constant $C$ occurs in the study of the Carmichael function $\lambda(n)$, which is defined to be the smallest positive integer satisfying $a^{\lambda(n)} \, \equiv \, 1 \; \; (\mathrm{mod} \, n)$ for every integer $a$ between 1 and $n$ coprime to $n$. In particular, for all numbers $m$ but $o(m)$ positive integers such that $n \leq m$, we have
$$\lambda(n) = \frac{n}{(\log n)^{\log \log \log n + A + o(1)}},$$
where $A = C-1$ (see \cite{sandor}). In addition, the constant $C$ also appears in the study of $\Omega(n)$, which is defined to be the number of not necessarily distinct prime factors of a natural number $n$. In particular, the variance of $\Omega(n)$ is given by the asymptotics
$$\mathrm{var}(\Omega(n)) \sim \log \log n + a_1+ \frac{a_2}{\log n} + \cdots$$
where $a_1 \approx .76478\dots$ and $a_2 = \gamma - 1 -2C$, where $\gamma$ is the Euler--Mascheroni constant (see \cite{finch}).

\subsection{LCM of a class of a polynomials.} Consider the least common multiple of the denominators of the coefficients of polynomials $p_m(n)$ defined by the recursion $$p_m(n) = \sum_{i=1}^n i^2p_{(m-1)}(i) $$
for $m\geq 1$, with $p_0(n)=1$; this generates the sequence
\begin{equation}\label{sequence} 1, 6, 360, 45360, 5443200, 359251200,\dots \end{equation} which is sequence A202367 on \url{www.oeis.org}. Vladimir Shevelev and Peter Moses noted that it is conjectured that the sequence in (\ref{sequence}) is given by
\begin{equation} \label{eqn16}
\prod_{p \in \mathbb{P}} p^{\sum_{k \geq 0} \left \lfloor \frac{n-1}{\left \lceil (p-1)/2 \right \rceil p^k} \right \rfloor}. 
\end{equation}
Let $S' = \{2,4,16,22,\dots\}$ denote the set associated with sequence A202367 so that we have its Bhargava factorial $n!_{S'}$, given by the formula in (\ref{eqn16}). We now prove Theorem \ref{thm3}.
\begin{proof}[Proof of Theorem \ref{thm3}]
Let $f(x) = \left \lceil (x-1)/2 \right \rceil$. We find $M=4$ and $\alpha = 2$ satisfy the criteria of Theorem \ref{thm13}, where $n!_f = (n+1)!_{S'}$. We have
$$\log (n+1)!_{S'} = \log (2n)!+ \beta n + o(n),$$
as desired.
\end{proof}
\newpage
\begin{table}[h!]
    \centering
    \caption{Table of values comparing $\log (n+1)!_{S'}$ and $\log (2n)!+\beta n$.}
    \label{tab:my_label2}
    \begin{tabular}{ |p{1cm}||p{4.75cm}|p{4.75cm}|  }
    \hline
 $n$ & $\log (n+1)!_{S'}$ & $\log (2n)!+ \beta n$  \\
 \hline
 1   & 1.7917\ldots           & 1.7607\ldots \\
 2   & 5.8861\ldots       & 5.3133\ldots \\
 3   & 10.7223\ldots           & 9.7821\ldots \\
 4   & 15.5098\ldots          & 14.8751\ldots \\
 5   & 19.6995\ldots        & 20.4426\ldots \\
 6   & 29.4033\ldots     & 26.3931\ldots \\
 7   & 31.1951\ldots     & 32.6647\ldots \\
 8   & 39.5089\ldots   & 39.2130\ldots \\
 9   & 48.3882\ldots    & 46.0042\ldots \\
 10  & 56.4899\ldots  & 53.0120\ldots \\
 100  & 982.0880\ldots   & 969.9960\ldots \\
 1,000  & 14,288.7934\ldots  & 14,274.2000\ldots \\
 5,000  & 87,486.3657\ldots   & 87,447.1000\ldots \\
 10,000  & 188,805.0729\ldots   & 188,752.0000\ldots \\
 \hline
\end{tabular}
\end{table}
\begin{acknowledgment}{Acknowledgments.}
The author is grateful to Ryan Kinser and David Adam for their remarks on the initial
version of the manuscript and to the reviewers for their feedback on this article. In addition, the author is grateful to Holly Swisher and Samadhi Metta-Bexar for their unconditional support.
\end{acknowledgment}

\begin{biog}
\item[Maiyu Diaz] received their MSc. in mathematics from the University of Iowa and their B.A. in mathematics from St. Mary's University in San Antonio, TX. They are currently completing their Ph.D. in mathematics at Texas Christian University.
\begin{affil}
Department of Mathematics, Texas Christian University, Fort Worth TX 76129\\
\end{affil}

\end{biog}

\vfill\eject

\end{document}